\newcommand{\rrvert}{\vert}
\newcommand{\llvert}{\vert}
\newcommand{\eqref}[1]{(\ref{#1})}
\newcommand{\ol}[1]{{\overline {#1}}}
\newcommand{\s}{m}
\newtheorem{theo}{Theorem}[section]
\newtheorem{prop}[theo]{Proposition}
\newtheorem{lem}[theo]{Lemma}
\newtheorem{cor}[theo]{Corollary}
\newcommand{\Rm}{\mathbb{R}}
\newcommand{\Pm}{\mathbb{P}}
\newcommand{\Fk}{F_{k}}
\newcommand{\Fl}{F_{\ell}}
\newcommand{\OO}{\mathcal{O}}
\newcommand{\PS}{P_{S}}
\newcommand{\RR}{\Rm}
\newcommand{\oH}{\ol{H}}
\newcommand{\minus}{\smallsetminus}
\newcommand{\argmin}{\operatorname{arg\,min}}
\begin{document}
\begin{frontmatter}

\title{Sticky central limit theorems on open books}
\runtitle{Sticky central limit theorems on open books}

\begin{aug}
\author[a]{\fnms{Thomas}~\snm{Hotz}\thanksref{m1}\ead[label=e1]{thomas.hotz@tu-ilmenau.de}},
\author[b]{\fnms{Stephan}~\snm{Huckemann}\thanksref{m2}\ead[label=e2]{huckeman@math.uni-goettingen.de}},
\author[c]{\fnms{Huiling}~\snm{Le}\ead[label=e3]{huiling.le@nottingham.ac.uk}},
\author[d]{\fnms{J.~S.}~\snm{Marron}\thanksref{m3}\ead[label=e4]{marron@email.unc.edu}},
\author[e]{\fnms{Jonathan~C.}~\snm{Mattingly}\thanksref{m4}\ead[label=e5]{jonm@math.duke.edu}},
\author[e]{\fnms{Ezra}~\snm{Miller}\corref{}\thanksref{m5}\ead[label=e6]{ezra@math.duke.edu}\ead[label=u6,url]{www.math.duke.edu/\textasciitilde ezra}},
\author[e]{\fnms{James}~\snm{Nolen}\thanksref{m6}\ead[label=e7]{nolen@math.duke.edu}},
\author[f]{\fnms{Megan}~\snm{Owen}\thanksref{m7}\ead[label=u8,url]{www.cs.uwaterloo.ca/\textasciitilde m2owen}},
\author[g]{\fnms{Vic}~\snm{Patrangenaru}\thanksref{m8}\ead[label=u9,url]{www.stat.fsu.edu/\textasciitilde vic}}
\and\break
\author[d]{\fnms{Sean}~\snm{Skwerer}\thanksref{m3}\ead[label=e10]{sskwerer@unc.edu}}
\thankstext{m1}{Supported by DFG Grant CRC~803.}
\thankstext{m2}{Supported by DFG Grants CRC~755 and HU~1575/2.}
\thankstext{m3}{Supported by NSF Grant DMS-08-54908.}
\thankstext{m4}{Supported by NSF Grant DMS-08-54879.}
\thankstext{m5}{Supported by NSF Grants DMS-04-49102${}={}$DMS-10-14112 and DMS-10-01437.}
\thankstext{m6}{Supported by NSF Grant DMS-10-07572.}
\thankstext{m7}{Supported by a desJardins Postdoctoral Fellowship in Mathematical Biology at the
University of California, Berkeley.}
\thankstext{m8}{Supported by NSF Grants DMS-08-05977 and DMS-11-06935.}

\runauthor{T. Hotz et al.}
\affiliation{%
Ilmenau University of Technology,
University of G\"ottingen,
University of Nottingham,
University of North Carolina at Chapel Hill,
Duke University,
Duke University,
Duke University,
University of Waterloo,
Florida State University
and
University of North Carolina at Chapel Hill}

\address[a]{T. Hotz\\
Institute of Mathematics\\
Ilmenau University of Technology\\
Weimarer Strasse 25, 98693 Ilmenau\\
Germany\\
\printead{e1}}
\address[b]{S. Huckemann\\
Institute for Mathematical Stochastics\\
University of G\"ottingen\\
Goldschmidtstrasse 7, 37077 G\"ottingen\\
Germany\\
\printead{e2}}
\address[c]{H. Le\\
School of Mathematical Sciences\\
University of Nottingham\\
University Park\\
Nottingham, NG7 2RD\\
United Kingdom\\
\printead{e3}}
\address[d]{J.~S. Marron\\
S. Skwerer\\
Department of Statistics \\
\quad and Operations Research\\
University of North Carolina\\
Chapel Hill, North Carolina 27599\hspace*{22pt}\\
USA\\
\printead{e4} \\
\phantom{E-mail:\ }\printead*{e10}}

\address[e]{J. C. Mattingly\\
J. Nolen\\
E. Miller\\
Mathematics Department\\
Duke University\\
Durham, North Carolina 27708\\
USA\\
\printead{e5}\\
\phantom{E-mail:\ } \printead*{e7} \\
\printead{u6}}
\address[f]{M. Owen\\
Cheriton School of Computer Science\hspace*{10pt}\\
University of Waterloo\\
Waterloo\\
ON N2L 3G1 Canada\\
\printead{u8}}
\address[g]{V. Patrangenaru\\
Department of Statistics\\
Florida State University\\
Tallahassee, Florida 32306\\
USA\\
\printead{u9}}
\end{aug}

\received{\smonth{2} \syear{2012}}
\revised{\smonth{9} \syear{2012}}

\begin{abstract}
Given a probability distribution on an open book (a metric space
obtained by gluing a disjoint union of copies of a half-space along
their boundary hyperplanes), we define a precise concept of when the
Fr\'echet mean (barycenter) is \emph{sticky}.  This nonclassical
phenomenon is quantified by a law of large numbers (LLN) stating that
the empirical mean eventually almost surely lies on the
(codimension~$1$ and hence measure~$0$) \emph{spine} that is the glued
hyperplane, and a central limit theorem (CLT) stating that the
limiting distribution is Gaussian and supported on the spine.  We also
state versions of the LLN and CLT for the cases where the mean is
nonsticky (i.e., not lying on the spine) and partly sticky (i.e., is, on the spine but not~sticky).
\end{abstract}

\begin{keyword}[class=AMS]
\kwd{60B99}
\kwd{60F05}
\end{keyword}
\begin{keyword}
\kwd{Fr\'echet mean}
\kwd{central limit theorem}
\kwd{law of large numbers}
\kwd{stratified space}
\kwd{nonpositive curvature}
\end{keyword}

\end{frontmatter}

\section*{Introduction}\label{sintro}

The mean of a finite set of points in Euclidean space moves slightly
when one of the points is perturbed.  This fluctuation is pervasive in
classical probabilistic and statistical situations.  In geometric
contexts, the barycenter (Fr\'echet mean \cite{Fr48}, $L^2$-minimizer, least
squares approximation), which minimizes the sum of the square
distances to a given set of points, generalizes the notion of mean.
Intuition from the Euclidean setting suggests that if the points are
randomly sampled from a well-behaved probability distribution on a
space $M$ of dimension $d+1$, then the random variable that is the
barycenter ought not be confined to a particular subspace of
dimension $d$ or less, if the distribution is generic.  While this
intuition has been made rigorous when $M$ is a manifold \cite{Jup88,HL96,BP05,Huc11},
it can fail when $M$ has certain types of
singularities, as we demonstrate here for an \emph{open book}~$\OO$: a
space obtained by gluing disjoint copies of a half-space along their
boundary hyperplanes; see Section \ref{sopenbook} for precise
definitions.

\begin{figure}

\includegraphics{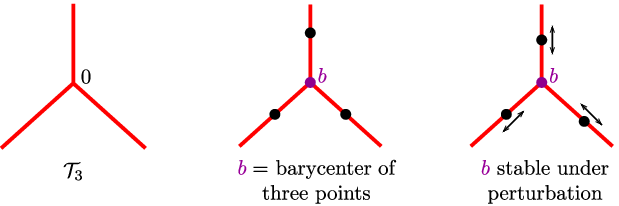}

\caption{(left) The space of rooted phylogenetic trees with three
leaves and fixed pendant edge lengths; (center) the probability
distribution supported on three points in $\mathcal{T}_3$ equidistant
from the vertex $0$ has bary\-center $0$; (right) perturbing the
distribution---and even macroscopically moving all three points a
limited distance---leaves the barycenter fixed.}\label{fT3}
\end{figure}

%
%
\begin{exs}\label{exT3}
The simplest singular space is the \emph{$3$-spider}: a union
$\mathcal{T}_3$ of three rays with their endpoints glued at a
point $0$ (Figure~\ref{fT3}, left).
This space $\mathcal{T}_3$ is the open book~$\OO$ of dimension $1$
with three leaves.  If three points are chosen equidistant from $0$ on
the different rays, then the barycenter lies at $0$ by symmetry
(Figure~\ref{fT3}, center).  The unexpected ``sticky'' phenomenon is
that wiggling one or more of the points has no effect on the
barycenter (Figure~\ref{fT3}, right).  For instance, if the points
lie at radius $r$ from $0$, then the barycenter remains at $0$ upon
moving one of the points to radius at most~$2r$.
\end{exs}

\begin{exs}\label{exdim2}
The name ``open book'' comes from the case of dimension 2, which looks
like an ordinary open book, in the usual lay sense of the words; see
Figure \ref{fd2K5}.
\end{exs}

\begin{figure}

\includegraphics{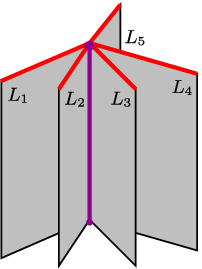}

\caption{Open book of dimension 2 with five leaves.  Ideally, the
picture of this embedding would continue to infinity vertically, both
up and down, as well as away from the spine on every
leaf.} \label{fd2K5}
\end{figure}

Our main goal is to define a precise concept of when a distribution on
an open book has a \emph{sticky mean} in Definition \ref{dcases}, and to quantify this highly
nonclassical condition with a law of large numbers (LLN) in
Theorem \ref{tstickyLLN} and a central limit theorem (CLT) in
Theorem \ref{tstickyCLT}.  Roughly speaking, the sticky LLN says that
in certain situations, empirical (sample) means almost surely
eventually lie on the \emph{spine}: the hyperplane shared by all of
the glued half-spaces by virtue of the gluing.  In Figure \ref{fT3},
the spine is the point $0$.  In Figure \ref{fd2K5}, the spine is the
central line.

The phenomenon of the sticky mean contrasts with the classical LLN,
where the empirical mean approaches the theoretical mean from all
directions.  The sticky CLT says that the limiting distribution is
Gaussian and supported on the spine.  Again, the nonclassical nature
of this result contrasts with the classical CLT, in which the limiting
distribution has full support rather than being supported on a thin
(positive codimension and hence measure zero) subset of the sample
space.  Versions of the LLN and CLT are also stated in
Theorems \ref{tstickyLLN}, \ref{tstickyCLT} and \ref{tnonsticky}
for the cases where the mean is:

\begin{itemize}
\item
nonsticky---not lying on the spine---so the LLN and CLT behave
classically; and

\item
partly sticky---on the spine but not sticky---so the LLN and CLT are
hybrids of the sticky and nonsticky ones.
\end{itemize}

This paper is motivated by a desire to understand statistical sampling
from topologically stratified spaces, including:

\begin{itemize}
\item
shape spaces, representing equivalence classes of point configurations
under operations such as rotation, translation, scaling, projective
transformations, or other nonlinear transformations (e.g., see \cite{DM98,PM03,PLS10}
for direct similarities, affine transformations,
and projective transformations, resp.);
\item
spaces of covariance matrices, arising as data points in diffusion
tensor imaging (see \cite{ArFiPeAy06,BassPier96,Schwartzman08,ScwMascTay08,BB11}, e.g.); and
\item
tree spaces, representing metric phylogenetic trees on fixed sets of
taxa (see \cite{BHV01,OwenProvan11,avgTree}, e.g.).
\end{itemize}

Open books are the simplest singular topologically stratified
spaces.\break
Roughly speaking, topologically stratified spaces decompose as finite
disjoint unions of manifolds (\emph{strata}) in such a way that the
singularities of the total space are constant along each stratum (this
is the structure described in \cite{SMT}, Section 1.4).  Every
topologically stratified space that is singular along a stratum of
codimension $1$ is, by definition of topological stratification,
locally homeomorphic to an open book along that stratum.  Therefore,
to understand statistical sampling from arbitrary stratified spaces
possessing singularities in maximal dimension, it is first necessary
to understand sampling from open books.

The metrics on open books that appear as local pieces of arbitrary
stratified spaces are arbitrary.  However, sticky means on open books
seem to stem from topological phenomena, rather than geometric ones,
so we consider only the simplest metric on $\OO$: each half-space has
the Euclidean metric and the boundaries are glued isometrically.
Although this restriction is substantial, these ``Euclidean'' open books
occur in applications.  For instance, the space $\mathcal{T}_3$ from
the first example above parametrizes all rooted (metric)
phylogenetic trees with three taxa and fixed pendant edge lengths.
More generally, open books of arbitrary dimension and precisely three
leaves reflect the local structure of phylogenetic tree space near any
point on a stratum of codimension $1$; such a point represents a tree
possessing a node with nonbinary branching.  Observations of
``unresolved'' (i.e., nonbinary) trees as barycenters of
biologically meaningful samples (see \cite{avgTree}, Examples 5.5 and 5.6, for descriptions of cases involving yeast
phylogenies and brain arteries) constituted crucial motivation for the
present study.

The relation between open books and tree spaces is that of local to
global.  After completing an early draft of this paper
we found that Basrak \cite{basrak} had independently and
simultaneously proved a sticky CLT for certain global situations in
dimension~$1$, namely arbitrary binary trees: connected graphs with no
cycles where each node is incident to at most three edges.  In
contrast, our dimension $1$ results are local, in that all edges meet,
but there can be more than three incident to the intersection.

It bears mentioning that in contrast to their behavior in open books,
barycenters do not stick to thin subspaces of shape spaces, or to thin
subspaces of more general quotients of manifolds by isometric proper
actions of Lie groups \cite{Huc12}.  The differentiating property
amounts to curvature: open books are, in a precise sense, negatively
curved at the spine, whereas passing to the quotient in the construction of shape
spaces adds positive curvature.  Basrak's binary trees \cite{basrak}
are negatively curved in the same way that open books or spaces
of trees are \cite{BHV01}: they are \emph{globally nonpositively
curved}.  (We recommend Sturm's exposition of this condition~\cite{sturm},
particularly for its clarity regarding connections
between probability and geometry, which was both a theoretical
starting point and a source of inspiration for our developments here.)
It is a principal long-term goal of our investigations to tease out
the connection between stickiness of means of probability
distributions with values in metric spaces and notions of negative
curvature.

%
%

\section{Open books}\label{sopenbook}

Set $S = \RR^d$, the real vector space of dimension $d$ with the
standard Euclidean metric.  If $\RR_{\geq 0} = [0,\infty)$ is the
closed nonnegative ray in the real line, then the closed half-space
\[
\oH_+ = \RR_{\geq 0} \times S
\]
is a metric subspace of $\RR^{d+1} = \RR \times S$ with boundary $S$
which we identify with $H = \{0\} \times S$, and interior $H_+ =
\RR_{>0} \times S$.  The \emph{open book} $\OO$ is the quotient of the
disjoint union $\oH_+ \times \{1,\ldots,K\}$ of $K$ closed half-spaces
modulo the equivalence relation that identifies their boundaries.
Therefore $p = (x,k) = (x^{(0)},x^{(1)},\ldots,x^{(d)}, k)$ is
identified with $q = (y,j) = (y^{(0)},y^{(1)},\ldots,y^{(d)}, j)$
whenever $x^{(0)} = 0 = y^{(0)}$ and $x^{(i)} = y^{(i)}$ for all $i
\in \{0,\ldots,d\}$, regardless of $k$ and $j$.  The following
definition summarizes and introduces terminology.

\begin{defin}[(Leaves and spine)]
The open book $\OO$ consists of $K \geq 3$ \emph{leaves} $L_k$, for $k
= 1, \dots, K$, each of dimension $d + 1$ and defined by
\[
L_k = \oH_+ \times \{k\}.
\]
The leaves are joined together along the \emph{spine} $L_0$ which
comprises the equivalence classes in $\bigcup_{k=1}^K (H \times
\{k\})$, that is, $L_0$ can be identified with the hyperplane
$H=\{0\}\times S$ or with the space $S = \RR^d$.  Thus, the open book
$\OO$ is the disjoint union
\begin{equation}
\label{lpartition} \OO = L_0 \cup L_1^+ \cup \cdots
\cup L_K^+
\end{equation}
of the spine $L_0$ and the interiors $L_k^+ = L_k \minus L_0$ of the
leaves, $k=1,\ldots,K$.  Figure~\ref{fd2K5} illustrates an open book
with $d = 1$ and $K = 5$.

When we speak of the spine in the following, we make clear which of
these three \emph{instances of the spine} we have in mind.  The
following diagram gives an overview of these instances, spaces and
mappings introduced further below in Definitions~\ref{dfolding-map}, \ref{dPS}, \ref{dconvproj} and in the proof of Lemma \ref{ly}.\vspace*{4pt}

\includegraphics{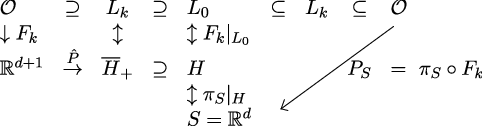}
\vspace*{4pt}

\end{defin}

\begin{defin}[(Reflection)]\label{defreflection}
For a given point $x \in \oH_+$, let $Rx \in \oH_- = \RR_{\leq 0}
\times \RR^d = (-\infty,0] \times \RR^d$ denote its
\textit{reflection} across the hyperplane $\oH_+ \cap \oH_- =
\{0\}\times S$.
\end{defin}

The metric $d$ on $\OO$ is expressed in terms of reflection in a
natural way: given two points $p, q \in \OO$, with $p = (x,k)$ and $q
= (y,j)$,
\begin{equation}
\label{eqd} d(p, q) = \cases{ |x - y|, & \quad $\mbox{if } k=j,$
\vspace*{2pt}\cr
|x - Ry|, &\quad  $\mbox{if } k\neq j,$}
\end{equation}
where $|x - y|$ denotes Euclidean distance on $\RR^{d+1}$.  Note that
if $k \neq j$ in equation~\eqref{eqd}, then $d(p,q) = 0$ if and only if
$x$ and $y$ lie on the spine and coincide.  Our assumption $K \geq 3$
implies that $\OO$ is not isometric to a subset of $\RR^{d+1}$ (as it
would be for $K \leq 2$).

The next lemma refers to \emph{globally nonpositive curvature}.  See
\cite{sturm} for a definition and background.  The only times we apply
this concept here are in noting the uniqueness of barycenters in our
context (see Definition \ref{dbarycenter} and the line following it)
and to obtain a quick proof of a strong law of large numbers
(Lemma \ref{lemLLN}).

\begin{lem}\label{lnpc}
The open book $(\OO,d)$ is a Hausdorff metric space that is globally
nonpositively curved, and its spine is isometric to $\RR^d$.
\end{lem}
\begin{pf}
\cite{sturm}, Example 3.3.
\end{pf}

%
%

\begin{rmk}\label{ractions}
Although the open book $\OO$ is not a vector space over $\RR$, scaling
by a positive constant $\lambda \in \RR_{\geq 0}$ is defined in the
natural way:
\[
\lambda p = (\lambda x,k) \qquad\mbox{for all } p = (x,k) \in \OO.
\]
The open book also carries an action of the spine $S$,
considered as an additive group, by translation, via the action of $S$
on each leaf:
\[
\OO \ni p = \bigl(x^{(0)},x^{(1)},\ldots, x^{(d)},k
\bigr) \stackrel{z} {\to} \bigl(x^{(0)},x^{(1)}+z^{(1)},
\ldots, x^{(d)}+z^{(d)},k\bigr)\in \OO,
\]
with $z = (z^{(1)},\ldots, z^{(d)}) \in S$.  For the above right-hand
side we write simply $z+p$.
\end{rmk}

\section{Probability measures on the open book}\label{smeasures}

Our goal is to understand the statistical behavior of points sampled
randomly from $\OO$.  Suppose that $\mu$ is a Borel probability
measure on $\OO$.  We assume throughout the paper that $d(0,q)$ has
bounded expectation under the measure $\mu$,
\begin{equation}
\label{finmean} \int_\OO \,d(0,q) \,d\mu(q) < \infty.
\end{equation}
When explicitly stated, we also assume the stronger condition
\begin{equation}
\label{finmean2} \int_\OO d(0,q)^2 \,d\mu(q) <
\infty,
\end{equation}
of square integrability.

\begin{lem}\label{lw}
Any Borel probability measure $\mu$ on the open book $\OO$ decomposes
uniquely as a weighted sum\vadjust{\goodbreak} of Borel probability measures $\mu_k$ on
the open leaves $L_k^+$ and a Borel probability measure $\mu_0$ on the
spine $L_0$.  More precisely, there are nonnegative real numbers
$\{w_k\}_{k=0}^K$ summing to $1$ such that, for any Borel set $A
\subseteq \OO$, the measure $\mu$ takes the value
\[
\mu(A) = w_0 \mu_0(A \cap L_0) + \sum
_{k=1}^K w_k
\mu_k\bigl(A \cap L_k^+\bigr).
\]
\end{lem}
\begin{pf}
This follows from the decomposition (\ref{lpartition}) and the
additivity of measures on disjoint sets.
\end{pf}

\begin{rmk}
For $k \geq 1$, $w_k=\mu(L_k^+)$ is the probability that a random
point lies in $L_k^+$, while $w_0=\mu(L_0)$ is the probability that a
point lies somewhere on the spine.
\end{rmk}

\begin{assum}\label{cnondegen}
Throughout this paper, assume the nondegeneracy condition
\begin{equation}
\label{nondegen} w_k=\mu \bigl(L_k^+ \bigr) > 0 \qquad\mbox{for all } k \in \{1,\ldots,K\}.
\end{equation}
Otherwise, we would remove those leaves $L_k$ for which $\mu(L_k^+) =
0$ from the open book.  Nondegeneracy implies that $w_0 < 1$ and $0 <
w_k < 1$ for all $k \geq 1$ in the decomposition from Lemma \ref{lw}.
\end{assum}

\begin{defin}[(Folding map)]\label{dfolding-map}
For $k \in \{1,\ldots,K\}$ the \emph{$k$\textup{th} folding map} $\Fk\dvtx  \OO
\to \RR^{d+1}$ sends $p \in \OO$ to
\[
\Fk p = \cases{x, &\quad  $\mbox{if } p = (x,k) \in
L_k,$
\vspace*{2pt}\cr
Rx, & \quad $\mbox{if } p = (x,j) \in L_j \mbox{ and } j \neq k,$}
\]
where the reflection operator $R$ was defined in
Definition \ref{defreflection}.
\end{defin}

\begin{rmk}
In the definition of the folding map $\Fk$, the leaf $L_k$ is
identified with the subset $\oH_+ \subset \RR^{d+1}$, by slight abuse
of notation (again).  The other leaves $L_j$ are collapsed to the
negative half-space $\oH_- \subset \RR^{d+1}$ via the reflection map.
All of these identifications have the same effect on the spine $S$,
which becomes the hyperplane $H = \{0\} \times \RR^d \subset
\RR^{d+1}$.  For example, $F_4$ takes the picture in
Figure \ref{fd2K5} to $\RR^2$ as in Figure \ref{fig3}.

\begin{figure}

\includegraphics{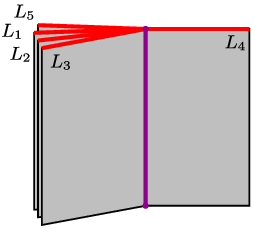}

\caption{The 4th folding map identifies leaf $L_4$ with the half-space
$\bar{H}_+ $and identifies all other leaves $L_j$ for $j \neq k$ with
the half-space $\bar{H}_-$.}\label{fig3}
\end{figure}

The notations $H_+$ and $H_-$ (with no bars) are reserved for the
\emph{strictly positive} and \emph{strictly negative} open half-spaces
that are the interiors of $\oH_+$ and $\oH_-$, respectively.
\end{rmk}

\begin{lem}
Under the folding map $\Fk$, the measure $\mu$ pushes forward to a
measure $\tilde\mu_k = \mu \circ \Fk^{-1}$ on $\RR^{d+1}$ such that,
given a Borel subset $A \subseteq \RR^{d+1}$,
\[
\tilde\mu_k(A) = w_k \mu_k(A \cap \oH_+)
+ w_0 \mu_0(A \cap S) + \mathop{\sum_{j \geq 1}}_{j \neq k} w_j \mu_j(A \cap H_-).
\]
\end{lem}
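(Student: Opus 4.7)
The plan is to unpack the definition of the pushforward $\tilde\mu_k(A) = \mu(\Fk^{-1}(A))$ using two earlier structural results: the disjoint decomposition $\OO = L_0 \cup L_1^+ \cup \cdots \cup L_K^+$ from Lemma~\ref{l:partition}, and the weighted-sum decomposition of $\mu$ from Lemma~\ref{l:w}. Together these reduce the verification to checking, stratum by stratum, that $\Fk$ sends $L_0$ bijectively to $H$, $L_k^+$ bijectively to $H_+$, and $L_j^+$ (for $j \neq k$) bijectively to $H_-$.

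First I would fix a Borel set $A \subseteq \RR^{d+1}$ and write
$$
\Fk^{-1}(A) = \bigl(\Fk^{-1}(A) \cap L_0\bigr) \cup \bigl(\Fk^{-1}(A) \cap L_k^+\bigr) \cup \bigcup_{\substack{j \geq 1\\ j \neq k}} \bigl(\Fk^{-1}(A) \cap L_j^+\bigr),
$$
a disjoint union by Lemma~\ref{l:partition}. Applying Lemma~\ref{l:w} then gives
$$
\mu\bigl(\Fk^{-1}(A)\bigr) = w_0\,\mu_0\bigl(\Fk^{-1}(A) \cap L_0\bigr) + w_k\,\mu_k\bigl(\Fk^{-1}(A) \cap L_k^+\bigr) + \sum_{\substack{j\geq 1\\ j\neq k}} w_j\,\mu_j\bigl(\Fk^{-1}(A) \cap L_j^+\bigr),
$$
since $\mu_0$ is concentrated on $L_0$ and each $\mu_j$ on $L_j^+$, so all cross terms vanish.

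Next I would identify each preimage by unravelling Definition of the folding map. On the spine, $\Fk$ acts as the canonical identification of $L_0$ with $H = \{0\} \times \RR^d \subseteq \RR^{d+1}$, so $\Fk^{-1}(A) \cap L_0$ corresponds, under the identification of $L_0$ with $S$, to $A \cap S$. On the $k^\th$ open leaf, $\Fk|_{L_k^+}$ is the identity map $(x,k) \mapsto x$ into $H_+$, so $\Fk^{-1}(A) \cap L_k^+$ corresponds to $A \cap H_+ = A \cap \oH_+$ modulo a $\mu_k$-null set (the spine), justifying the statement's use of $\oH_+$. For $j \neq k$, $\Fk|_{L_j^+}$ is the reflection $(x,j) \mapsto Rx$, a bijection $L_j^+ \to H_-$, so $\Fk^{-1}(A) \cap L_j^+$ corresponds to $A \cap H_-$. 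Substituting these identifications back into the displayed sum yields the claimed formula.

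There is no real obstacle here; this is a bookkeeping argument. The only point deserving a word of care is the notational juggle between the three instances of the spine ($L_0$, the hyperplane $H$, and the vector space $S$), which is precisely the kind of identification the authors forewarned the reader about via the commutative diagram in the open-book definition. Measurability of the preimages follows at each step from the fact that the restriction of $\Fk$ to each stratum is a continuous (indeed, isometric up to reflection) bijection onto its image.
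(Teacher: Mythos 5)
Your argument is correct and is essentially the same as the paper's, which cites only Lemma~\ref{l:w}; you have simply unpacked that citation by explicitly invoking the partition from Lemma~\ref{l:partition} and tracking where $\Fk$ sends each stratum. The bookkeeping details you supply (disjointness, stratum-wise bijectivity, the harmless $H_+$ versus $\oH_+$ discrepancy on a $\mu_k$-null set) are all sound.
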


\begin{pf}
Lemma \ref{lw}.
\end{pf}

\begin{defin}[(First moment on a leaf)]\label{dFirstMomentOnLeaf}
Let $x^{(0)},\ldots,x^{(d)}$ be the coordinate functions
on $\RR^{d+1}$.  The \emph{first moment} of the measure $\mu$ on the
$k$th leaf $L_k$ is the real number
\[
\s_k = \int_{\RR^{d+1}} x^{(0)} \,d\tilde
\mu_k(x) = \int_\OO (\pi_0 \Fk p)
\,d\mu(p),
\]
where $\pi_0\dvtx \RR^{d+1} \to \RR$ is the orthogonal projection with
kernel $H = \{0\} \times \RR^d$.
\end{defin}

\begin{rmk}
For any point $p \in \OO$, the projection $\pi_0 \Fk p$ is positive if
$p \in L_k^+$ and negative if $p \in L_j^+$ for some $j \neq k$.
Moreover, $|\pi_0 \Fk p| = |x^{(0)}|$ is the distance of $p$ from the
spine.  The integrability in equation \eqref{finmean} guarantees that the
first moments of $\mu$ are all finite.
\end{rmk}

\begin{theo}\label{tcases}
Under integrability \eqref{finmean} and
nondegeneracy \eqref{nondegen}, either:
\begin{longlist}[(3)]
\item[(1)]
$\s_j < 0$ for all indices $j \in \{1,\ldots,K\}$,
or there is exactly one index $k \in \{1,\ldots,K\}$ such that $\s_k
\geq 0$, in which case either:
\item[(2)]
$\s_k > 0$, or
\item[(3)]
$\s_k = 0$.
\end{longlist}
\end{theo}

\begin{pf}
For $k = 1,\ldots,K$, let
\[
v_k = \int_{H_+} x^{(0)} \,d
\mu_k(x) .
\]
The nondegeneracy (\ref{nondegen}) implies that $v_k > 0$. Observe that
\[
\s_k = w_k v_k - \mathop{\sum
_{j \geq 1}}_{ j \neq k} w_j v_j.
\]
For any $j \neq k\in \{1,\ldots,K\}$,
\[
\s_j = w_j v_j - \mathop{\sum
_{\ell \geq 1}}_{ \ell \neq j} w_\ell v_\ell \leq
w_j v_j - w_k v_k \leq
\biggl(\mathop{\sum_{\ell \geq 1}}_{ \ell \neq k}
w_\ell v_\ell \biggr) - w_k v_k = -
\s_k,
\]
since the weights $w_\ell$ are nonnegative.  Therefore, if $\s_k >0$
for some $k$, then $\s_j \leq -\s_k < 0$ for all $j \neq k$.  Also, if
$\s_k = 0$ for some index $k$, then $\s_j \leq 0$ for all $j \neq k$.

Now suppose there are two indices $j,k \in \{1,\ldots,K\}$ such that
$j \neq k$ and $\s_j = 0$ and $\s_k = 0$. Then
\[
0 = \s_j = w_j v_j - w_k
v_k - \mathop{\sum_{\ell \geq 1}}_{ \ell \neq j, k}
w_\ell v_\ell
\]
and
\[
0 = \s_k = w_k v_k - w_j
v_j - \mathop{\sum_{\ell \geq 1}}_{ \ell \neq j, k}
w_\ell v_\ell.
\]
Adding these two equalities results in
\[
0 = \s_j + \s_k = - 2 \mathop{\sum
_{\ell \geq 1}}_{ \ell \neq j, k} w_\ell v_\ell.
\]
Since $w_\ell v_\ell \geq 0$, it follows that $w_\ell v_\ell = 0$ for
all $i \neq j,k$.  Consequently, $\mu(L_\ell^+) = 0$ for all $\ell
\neq j,k$.  However, this contradicts nondegeneracy \eqref{nondegen}
and the fact that $K \geq 3$.  Hence at most one of the numbers $\s_k$
can be nonnegative.
\end{pf}

Motivated by Theorem \ref{tstickyLLN} and
Corollary \ref{cstickylim}, we use the following terms to describe
the three mutually-exclusive conditions given in Theorem \ref{tcases}.

\begin{defin}\label{dcases}
Under integrability \eqref{finmean} and
nondegeneracy \eqref{nondegen}, we say that the mean of the
measure $\mu$ is either:
\begin{longlist}[(1)]
\item[(1)]
\emph{sticky} if $\s_j < 0$ for all indices $j \in \{1,\ldots,K\}$, or
\item[(2)]
\emph{nonsticky} if $\s_k > 0$ for some (unique) $k \in \{1,\ldots,K\}$, or
\item[(3)]
\emph{partly sticky} if $\s_k = 0$ for some (unique) $k \in \{1,\ldots,K\}$.
\end{longlist}
\end{defin}

\begin{rmk}\label{rsquareint}
If square integrability (\ref{finmean2}) also holds, the first moment
$m_k$ may be identified with the partial derivative
\[
m_k = - \frac{\partial \Gamma_k }{\partial x^{(0)}} (x) \Big|_{x^{(0)} = 0}
\]
where $\Gamma_k\dvtx \Rm^{d+1} \to \Rm$ is defined by
\[
\Gamma_k(x) = \frac{1}{2} \int_{\Rm^{d+1}} |x -
y|^2 \,d\tilde \mu_k(y).
\]
Observe that $-\frac{\partial \Gamma_k }{\partial x^{(0)}} (x)$
depends on $x^{(0)}$, but not on $(x^{(1)},\dots,x^{(d)})$.
\end{rmk}

\section{Sample means}\label{smeans}

For any finite collection of points $\{p_n\}_{n=1}^N \subset \OO$, the
Fr\'echet mean is a natural generalization of the arithmetic mean in
Euclidean space:

\begin{defin}\label{dbarycenter}
The \emph{Fr\'echet mean}, or \emph{barycenter}, of a set
$\{p_n\}_{n=1}^N \subset \OO$ of points is
\[
b(p_1,\ldots,p_N) = \mathop{\argmin}_{p \in \OO} \Biggl(\sum
_{n=1}^N d( p, p_n)^2
\Biggr).
\]
\end{defin}
By Lemma \ref{lnpc} and \cite{sturm}, Proposition 4.3, the barycenter
$b(p_1,\ldots,p_N) \in \OO$ exists and is unique.

\begin{defin}
For fixed $k \in \{1,\ldots,K\}$, the point $\eta_{k,N} \in \RR^{d+1}$
defined by
\begin{equation}
\label{etaNdef} \eta_{k,N} = \frac{1}{N} \sum
_{n=1}^N \Fk p_n
\end{equation}
is the \emph{$k$\textup{th} folded average}: the barycenter of the
pushforward under the $k$th folding map.
\end{defin}

For a set of points $\{p_n\}_{n=1}^N \subset \OO$, the condition
$b(p_1,\ldots,p_N) \in L_0$ does not necessarily imply $\eta_{k,N} \in
H$.  Nevertheless, the following lemma establishes an important
relationship between $b(p_1,\ldots,p_N)$
and $\eta_{k,N}$. Specifically, taking barycenters commutes with the
$k$th folding in two cases: if the barycenter lies off the spine
in $L_k^+$; or if the $k$th folded average lies in the closure of
the positive half-space.

\begin{lem}\label{lembNetaN}
Let $\{p_n\}_{n=1}^N \subset \OO$ and $b_N = b(p_1,\ldots,p_N)$. If
$b_N \in L_k^+$, then $\eta_{k,N} \in H_+$ and $\eta_{k,N} = \Fk b_N$.
If $\eta_{k,N} \in \oH_+$, then $b_N \in L_k$ and $\Fk b_N =
\eta_{k,N}$ (i.e. $b_N = (\eta_{k,N} ,k)$).
\end{lem}

\begin{pf}
Let $k,\ell \in \{1,\ldots,K\}$. If $p \in L_k$, then $d(p,p_n) =
|\Fk p - \Fk p_n|$.  Therefore, if $b_N \in L_k^+$, then
\[
b_N = \mathop{\argmin}_{p \in \OO} \sum_{n=1}^N
d( p, p_n)^2 = \mathop{\argmin}_{p \in L_k^+} \sum
_{n=1}^N |\Fk p - \Fk p_n|^2.
\]
Since $\Fk$ is continuously bijective from $L_k$ to $\oH_+$, this
implies that the function
\[
z \mapsto \sum_{n=1}^N |z - \Fk
p_n|^2
\]
attains a local minimum in the open set $H_+$.  However, this
functional has only one local minimizer, which must be the unique
global minimizer $\eta_{k,N}$,
\[
\eta_{k,N} = \mathop{\argmin}_{z \in \RR^{d+1}} \sum_{n=1}^N
|z - \Fk p_n|^2.
\]
Consequently, $\eta_{k,N} \in H_+$ and hence $\Fk b_N = \eta_{k,N}$.

If $b_N \notin L_k$, then $b_N \in L_\ell^+$ for some $\ell \neq k$.
Hence $\eta_{\ell,N} = \Fl b_N$, as we have shown.  In particular,
$\eta_{\ell,N} \in H_+$ and $\pi_0 \eta_{\ell,N} > 0$.  Hence
\begin{equation}
\label{FkFjneg}\quad  \sum_{p_n \in L_\ell^+} \pi_0 \Fl
p_n >- \sum_{p_n \notin L_\ell^+} \pi_0 \Fl
p_n \geq - \sum_{p_n \in L_k} \pi_0
\Fl p_n = \sum_{p_n \in L_k} \pi_0
\Fk p_n.
\end{equation}
Observe that
\begin{eqnarray*}
\pi_0 \eta_{k,N}& =& \frac{1}{N} \sum
_{n=1}^N \pi_0 \Fk p_n
\leq \frac{1}{N} \sum_{p_n \in L_k} \pi_0
\Fk p_n + \frac{1}{N} \sum_{p_n \in L_\ell^+}
\pi_0 \Fk p_n
\\
& = & \frac{1}{N} \sum_{p_n \in L_k} \pi_0
\Fk p_n - \frac{1}{N} \sum_{p_n \in L_\ell^+}
\pi_0 \Fl p_n.
\end{eqnarray*}
Because of equation \eqref{FkFjneg}, this last expression is negative.
Hence, we have shown that $b_N \notin L_k$ implies $\eta_{k,N} \in
H_-$.  Therefore, if $\eta_{k,N} \in \oH_+$ it must be that $b_N \in
L_k$.  Consequently, as above,
\begin{eqnarray*}
b_N & = & \mathop{\argmin}_{p \in \OO} \sum_{n=1}^N
d( p, p_n)^2
\\
& = & \mathop{\argmin}_{p \in L_k} \sum_{n=1}^N
|\Fk p - \Fk p_n|^2
\\
& = & \Fk^{-1} \Biggl( \mathop{\argmin}_{z \in \oH_+} \sum
_{n=1}^N |z - \Fk p_n|^2
\Biggr)
\\
& = & \Fk^{-1} \eta_{k,N}.
\end{eqnarray*}
Note that $\Fk^{-1}\eta_{k,N}$ is well defined, since $\eta_{k,N} \in
\oH_+$.
\end{pf}

\begin{defin}\label{dPS}
Given a point $p = (x,j) = (x^{(0)},x^{(1)},\ldots,x^{(d)},j) \in
\OO$,
\[
\PS p = \bigl(x^{(1)},\ldots,x^{(d)}\bigr) \in S
\]
is the orthogonal projection of $p$ onto the spine $S$.
\end{defin}

The following lemma shows that taking barycenters commutes with
projection to the spine.

\begin{lem}\label{ly}
If $\{p_n\}_{n=1}^N \subset \OO$ and
\[
\bar y_N = \frac{1}{N} \sum_{n=1}^N
\PS p_n,
\]
then $\bar y_N = \PS b(p_1,\ldots,p_N)$.
\end{lem}

\begin{pf}
Let $\pi_S\dvtx \RR^{d+1} \to \RR^d$ be the orthogonal projection onto the
last $d$ coordinates.  Let $b_N = b(p_1,\ldots,p_N)$.  If $b_N \in
L_k^+$ for some $k$, then $\eta_{k,N} = \Fk b_N$ by
Lemma \ref{lembNetaN}.  Therefore, since $\PS p = \pi_S \Fk p$ for
all $p \in \OO$,
\[
\PS b_N = \pi_S \Fk b_N =
\pi_S \eta_{k,N} = \frac{1}{N} \sum
_{n=1}^N \pi_S \Fk p_n =
\frac{1}{N} \sum_{n=1}^N \PS
p_n = \bar y_N.
\]

On the other hand, if $b_N \in L_0$ then by definition of $b_N$,
\[
b_N = \mathop{\argmin}_{p \in L_0} \sum_{n=1}^N
d(p,p_n)^2 = \mathop{\argmin}_{p \in S} \sum
_{n=1}^N \bigl(|\pi_0
p_n|^2 + |p - \PS p_n|^2
\bigr).
\]
Therefore $\PS b_N = \argmin_{y \in \RR^d} \sum_{n=1}^N |y - \PS
p_n|^2 = \frac{1}{N} \sum_{n=1}^N \PS p_n = \bar y_N$, as desired.
\end{pf}

\section{Random sampling and the law of large numbers}\label{sLLN}

We now consider points $\{p_n\}_{n=1}^N$ sampled independently at
random from a Borel probability measure $\mu$ on $\OO$; we wish to
understand the statistical behavior of their barycenter for large
$N$.  More precisely, let $(\Omega,\mathcal{F},\Pm)$ be a probability
space, and for each integer $n \geq 1$ let $p_n(\omega)\dvtx \Omega \to
\OO$ for fixed $\omega \in \Omega$ be a random point in $\OO$.

Assume for all $n \geq 1$ that $p_1,\ldots,p_n$ are independent random
variables and that for any Borel set $A \subseteq \OO$,
\[
\Pm(p_n \in A) = \Pm \bigl(\bigl\{\omega \in \Omega \mid
p_n(\omega) \in A\bigr\} \bigr) = \mu(A).
\]
The sample space $\Omega$ may be constructed as the set of infinite
sequences $(p_1,p_2, p_3,\ldots)$ of points in $\OO$ endowed with the
product measure $\Pm =\break \prod_{n=1}^\infty \mu(p_n)$ on the
$\sigma$-algebra $\mathcal{F}$ generated by cylinder sets.  Observe
that the folded points $\{\Fk p_n(\omega)\}_{n=1}^\infty \subset
\RR^{d+1}$ are independent, each distributed according to
$\tilde\mu_k$.

\begin{defin}
For any positive integer $N$, let $b_N(\omega) = b(p_1,\ldots,p_N)$
denote the barycenter of the random sample
$\{p_1(\omega),\ldots,p_N(\omega)\}$.  This random point in $\OO$ is
the \emph{empirical mean} of the distribution $\mu$. Similarly, for $k
\in \{1,\dots,K\}$, the random point $\eta_{k,N}(\omega) \in
\RR^{d+1}$ denotes the \emph{$k$\textup{th} folded average} of the random
sample $\{p_1(\omega),\ldots,p_N(\omega)\}$, as defined
by (\ref{etaNdef}).
\end{defin}

The goal is to understand the statistical behavior of empirical
means $b_N$ as \mbox{$N \to \infty$}.

\begin{lem}[(Strong law of large numbers)]\label{lemLLN}
There is a unique point $\bar b \in \OO$ such that
\[
\lim_{N \to \infty} b_N(\omega) = \bar b
\]
holds $\Pm$-almost surely.  If the square integrability condition (\ref{finmean2}) also holds, the limit $\bar b$ is the \emph{Fr\'echet
mean} (or \emph{barycenter}) of $\mu$,
\[
\bar b = \mathop{\argmin}_{p \in \OO} \int_\OO
d(p,q)^2 \,d\mu(q).
\]
\end{lem}
\begin{pf}
This is a special case of \cite{sturm}, Proposition 6.6, whose
generality occurs in the context of distributions on globally
nonpositively curved spaces.  (An elementary proof from scratch is
also possible, using arguments similar to the proof of
Theorem \ref{tstickyLLN}.  In general on metric spaces, there can be
more than one Fr\'echet mean, and there are corresponding set-valued
strong laws \cite{Zie77,BP03}.)
\end{pf}

\begin{theo}[(Sticky LLN)]\label{tstickyLLN}
Assume nondegeneracy \eqref{nondegen}.
\begin{longlist}[(1)]
\item[(1)]
If the moment $\s_j$ satisfies $\s_j < 0$, then there is a random
integer $N^*(\omega)$ such that $b_N(\omega) \notin L_j^+$ for all $N
\geq N^*(\omega)$ holds $\Pm$-almost surely. Furthermore, $\bar b
\notin L_j^+$.
\item[(2)]
If the moment $\s_k$ satisfies $\s_k > 0$, then there is a random
integer $N^*(\omega)$ such that $b_N(\omega) \in L_k^+$ for all $N
\geq N^*(\omega)$ holds $\Pm$-almost surely. Furthermore, $\bar b \in
L_k^+$.
\item[(3)]
If the moment $\s_k$ satisfies $\s_k = 0$, then there is a random
integer $N^*(\omega)$ such that $b_N(\omega) \in L_k$ for all $N \geq
N^*(\omega)$ holds $\Pm$-almost surely.  Furthermore, $\bar b \in
L_0$.
\end{longlist}
\end{theo}

\begin{pf}
By the usual strong law of large numbers,
\[
\lim_{N \to \infty} \eta_{k,N} = \bar\eta_k = \int
_{\RR^{d+1}} x \,d\tilde\mu_k(x)
\]
holds $\Pm$-almost surely.  Observe that $\s_k = \pi_0 \bar \eta_k$.
Therefore, if $\s_k > 0$, $\bar\eta_k \in H_+$ and $\eta_{k,N} \in
H_+$ for all sufficiently large $N$.  In that case, $b_N \in L_k^+$
for all sufficiently large $N$ by Lemma \ref{lembNetaN}.  In fact,
$\pi_0 b_N = \pi_0 \eta_{k,N} > \s_k/2 >0$ for $N$ sufficiently large,
so by\vadjust{\goodbreak} virtue of Lemma \ref{lemLLN}, $\bar b \in L_k^+$.  The same
argument starting with $m_k \geq 0$ proves the case $m_k = 0$.  On the
other hand, if $\s_j < 0$, then $\eta_{j,N} \in H_-$ for all
sufficiently large $N$; Lemma \ref{lembNetaN} implies that $b_N
\notin L_j^+$ for all sufficiently large $N$, and $\bar b \notin
L_j^+$.
\end{pf}

As a consequence, if the mean of $\mu$ is sticky then
the empirical mean $b_N$ sticks to the spine $L_0 \subset \OO$ for all
sufficiently large $N$, in the following sense.

\begin{cor}\label{cstickylim}
If the mean of $\mu$ is sticky, then there is a random integer
$N^*(\omega)$ such that $b_N(\omega) \in L_0$ for all $N \geq
N^*(\omega)$ holds $\Pm$-almost surely. Moreover, $\bar b \in L_0$. If
the mean of $\mu$ is partly sticky, with $\s_k = 0$, then then there
is a random integer $N^*(\omega)$ such that $b_N(\omega) \in L_k$ for
all $N \geq N^*(\omega)$ holds $\Pm$-almost surely.  Moreover, $\bar b
\in L_0$.
\end{cor}

Recall that $\PS$ is the orthogonal projection onto the spine $S$.  The
measure $\mu$ pushes forward along the projection to a measure $\mu_S
= \mu \circ \PS^{-1}$ on $S$,
\[
\mu_S(A) = \mu\bigl(\PS^{-1} A\bigr)
\]
for any Borel set $A \subseteq \RR^d$.  Note that $\mu_0(A) \leq
\mu_S(A)$ for all Borel sets $A \subseteq S$, but $\mu_S \neq \mu_0$
by Assumption \ref{cnondegen}.

\begin{cor}\label{cPSb}
In all cases (sticky, nonsticky, partly sticky), the limit $\bar b \in
\OO$ satisfies
\begin{equation}
\label{PSblim} P_S \bar b = \int_{S} y \,d
\mu_S(y).
\end{equation}
\end{cor}
\begin{pf}
By Lemma \ref{ly} and Theorem \ref{tstickyLLN},
\[
P_S \bar b = P_S \lim_{N \to \infty} b_N =
\lim_{N \to \infty} \bar y_N
\]
holds almost surely.  By the strong law of large numbers for $\bar y_N
\in S = \RR^d$, the last limit is (\ref{PSblim}).
\end{pf}

\section{Central limit theorems}\label{sCLTs}

In this section we consider fluctuations of the empirical mean
$b_N(\omega)$ about the asymp\-totic limit $\bar b$, within the tangent
cone at $\bar b$.  We have shown that if the mean is either sticky or
partly sticky, then $\bar b \in S$, and the tangent cone at $\bar b$
is an open book $\OO$.  On the other hand, if the mean is
nonsticky, with $\s_k > 0$, then $\bar b$ is in the interior of the
leaf $L_k^+$ and the tangent cone at $\bar b$ is the vector space
$\RR^{d+1}$.  We treat these two scenarios separately.

These facts essentially follow from Theorem \ref{tstickyLLN} which shows
that in the sticky cases with probability one the fluctuations away
from the mean in certain directions stop as more random variables are
added to the empirical mean.  In particular, this implies that the
correctly normalized limit of the fluctuation from the mean cannot, in
the sticky case, converge to a Gaussian random variable as one would\vadjust{\goodbreak}
have in the standard central limit theorem.  Since the fluctuations in
some directions are exactly zero at some point along each sequence of
random variables, it is not all together surprising that limiting
measure has mass concentrated on a lower dimensional set.  This is the
content of Theorem \ref{tstickyCLT} which is the principal result of
this section.\looseness=-1

\subsection{The sticky central limit theorem}

Throughout this section, assume $m_j \leq 0$ for all $j
\in\{1,\ldots,K\}$.  Hence $\bar b \in L_0$, and the mean is either
sticky or partially sticky.  In the partially sticky case, denote
by $k$ the unique index satisfying $m_k = 0$.  The central limit
theorem involves a centered and rescaled empirical mean.

\begin{defin}[(Rescaled empirical mean)]\label{drescaled}
Assume that $\PS \bar b = 0$ (after the action of $-\PS \bar b\in S$
on $\OO$ as explained in Remark \ref{ractions} if necessary).  The
\emph{rescaled empirical mean} is the random variable $\sqrt{N} b_N
\in \OO$.  Write $\nu_N$ for its induced probability law on $\OO$,
\[
\Pm \bigl(\bigl\{\omega \mid \sqrt{N} b_N(\omega) \in A\bigr\}
\bigr) = \int_{\OO \cap A} \,d \nu_N(p)
\]
for all Borel sets $A \subseteq \OO$.
\end{defin}

Since in sticky settings, we need to collapse fluctuations in some
directions back to the spine, it is convenient to define the following
projection.
\begin{defin}\label{dconvproj}
The convex projection $\hat P$ of $\RR^{d+1}$ onto $\oH_+$ is
\[
\hat P x = \cases{\bigl(0,x^{(1)},
\ldots,x^{(d)}\bigr), & \quad $\mbox{if } x^{(0)} < 0,$
\vspace*{2pt}\cr
\bigl(x^{(0)},x^{(1)},\ldots,x^{(d)}\bigr), &\quad  $\mbox{if
} x^{(0)} \geq 0.$}
\]
\end{defin}

We now define measures which we will see shortly describe the limiting
behaviors of $\nu_N$ as $N\rightarrow \infty$. In short, they are the
limiting measures in the central limit theorem given in
Theorem \ref{tstickyCLT} below.

\begin{defin}\label{dspinal}
Assume square integrability \eqref{finmean2} and assume that $\PS \bar
b = 0$.
\begin{longlist}[(3)]
\item[(1)]
The \emph{spinal limit measure} $g_S$ is the law of a multivariate
normal random variable on the spine $S \cong \RR^d$ with mean zero and
covariance matrix
\[
C_S = \int_{\RR^d} y y^T \,d
\mu_S(y) = \int_\OO (\PS p) (\PS
p)^T \,d \mu(p).
\]

\item[(2)]
The $k$th \emph{costal
\setcounter{footnote}{8}\footnote{Adjective: of or pertaining to the ribs, in anatomy.}
limit measure} $g_k$ is the law of a multivariate normal random
variable on $\RR^{d+1}$ with mean zero and covariance matrix
\[
C_k = \int_{\RR^{d+1}} xx^T \,d \tilde
\mu_k(x) = \int_\OO (\Fk p) (\Fk
p)^T \,d \mu(p).
\]\eject

\item[(3)]
The $k$th \emph{spinocostal\footnote{Adjective: spanning the ribs and spine, in anatomy.}
limit measure} $h_k$ on the closed leaf $L_k \cong \oH_+$ is defined
by
\[
h_k(A) = h_k^{0} \bigl(F_k(A)
\cap H \bigr) + g_k \bigl(F_k(A) \cap \oH_+ \bigr)
\]
for Borel sets $A \subseteq L_k$, where the \emph{semispinal limit
measure} $h_k^{0}$ on $L_0$ is defined by
\[
h_k^0 \bigl((P_S|_{L_0})^{-1}B
\bigr) = g_S(B) - g_k \bigl((0,\infty) \times B \bigr)
\]
for Borel sets $B\subseteq S$.  (A possibly more natural definition of
$h_k$ is given in Proposition \ref{pconvexProj} below.)
\end{longlist}
\end{defin}

\begin{rmk}\label{rfinite}
Square integrability \eqref{finmean2} implies that the covariance
matrices are finite.
\end{rmk}

\begin{rmk}
The semispinal limit measure is generally not Gaussian.  Although
the orthogonal projection to $\RR^d$ of any Gaussian measure on
$\RR^{d+1}$ is Gaussian, $h_k^{0}$ is the projection of only half of a
Gaussian; this is implied by Proposition~\ref{pconvexProj}, an
alternate direct description of $h_k$ interpolating between the first
two parts of Definition \ref{dspinal}.
\end{rmk}

\begin{prop}\label{pconvexProj}
The spinocostal limit measure is the pushforward of the costal limit
measure $g_k$ under convex projection: $h_k = g_k \circ \hat
P^{-1}\circ F_k$.
\end{prop}
\begin{pf}
Since the measures agree on $L_k$ outside of $L_0$ by definition, it
is enough to show that
\begin{equation}
\label{g1g0P} h_k^0 \bigl((P_S|_{L_0})^{-1}B
\bigr) = g_k \bigl(\hat P^{-1}\circ (\pi_S|_H)^{-1}B
\bigr)
\end{equation}
for any Borel set $B \subseteq S$.  For any vectors $w, w' \in
\RR^{d+1}$ that lie on the spine $H \subseteq \RR^{d+1}$, considering
them as vectors in $z=\pi_S(w), z'=\pi_S(w') \in S=\RR^d$ results in
quantities $z^T C_S z'$, and $w^T C_k w'$.  The integrals in
Definition \ref{dspinal} directly imply that $z^T C_S z' = w^T C_k
w'$.  Consequently, the matrix $C_S$ is a submatrix of $C_k$; the
action of $C_k$ on the subspace $H$ is given by $C_S$.  Thus $g_S(B) =
g_k ((-\infty,\infty)\times B )$, and hence by definition
\begin{eqnarray*}
h_k^0(B) &=& g_k \bigl((-\infty,\infty)
\times B \bigr) - g_k \bigl((0,\infty)\times B \bigr) =
g_k \bigl((-\infty,0]\times B \bigr) \\
&=& g_k \bigl(\hat
P^{-1}\circ(\pi_S|_H)^{-1}B \bigr)
\end{eqnarray*}
for any Borel set $B \subseteq S$.
\end{pf}

Now we come to the primary result in the paper: as the sample size $N$
becomes large, the law $\nu_N$ of the rescaled empirical mean
converges weakly to the appropriate measure from
Definition \ref{dspinal}, according to how sticky the mean is.  (We
have included a forward reference to the nonsticky case in
Theorem \ref{tstickyCLT} to preserve the numbering of items 1, 2
and 3, which corresponds precisely to the numbering elsewhere, namely
Theorem \ref{tcases}, Definition \ref{dcases},
Theorem \ref{tstickyLLN}, and Definition~\ref{dspinal}.)  When the
mean is:
\begin{longlist}[(1)]
\item[(1)]
sticky, $\nu_N$ converges weakly to the spinal limit measure $g_S$;

\item[(2)]
nonsticky, $\nu_N$ converges weakly to the costal limit measure $g_j$
supported on the tangent space $\RR^{d+1}$ to the leaf $L_j$
containing the mean;

\item[(3)]
partly sticky, $\nu_N$ converges weakly to the spinocostal limit
measure $g_j$ supported on the (unique) leaf $L_k$ with moment $\s_k =
0$.
\end{longlist}
As discussed at the start of the section, the fact that the limiting
distribution is supported on the spine $S$ when the mean is sticky
follows from Theorem \ref{tstickyLLN}, since then the first moments
$\s_j$ are strictly negative for all $j$.

\begin{theo}[(Sticky CLT)]\label{tstickyCLT}
Let $\mu$ be a nondegenerate \eqref{nondegen} probability distribution
on the open book $\OO$ with finite second moment \eqref{finmean2}.
\begin{longlist}[(1)]
\item[(1)]
If the mean of $\mu$ is sticky, then for any continuous, bounded
function $\phi\dvtx \OO \to \RR$,
\[
\lim_{N \to \infty} \int_\OO \phi(p) \,d
\nu_N(p) = \int_S \phi\circ
(P_S|_{L_0})^{-1}(q) \,dg_S(q).
\]

\item[(2)]
If the mean of $\mu$ is nonsticky, then see Theorem \ref{tnonsticky}.

\item[(3)]
If the mean of $\mu$ is partly sticky, with first moment $\s_k = 0$,
then for any continuous bounded function $\phi\dvtx \OO \to \RR$,
\[
\lim_{N \to \infty} \int_\OO \phi(p) \,d
\nu_N(p) = \int_{\oH_+} \phi\circ
F_k^{-1}(q) \,dh_k(q).
\]
\end{longlist}
\end{theo}

\begin{pf}
The proof works by decomposing the relevant measures---the empirical
mean on the open book and its pushforward to $\RR^{d+1}$ under
folding---into pieces corresponding to the leaves and the spine.

Suppose that the mean is partly sticky with first moment $\s_k = 0$.
Let $\eta_N = \eta_{k,N}$ as in \eqref{etaNdef}, and let
$\nu_{\eta,N}(x)$ denote the law of $\sqrt{N} \eta_N$ on $\RR^{d+1}$.
By Lemma~\ref{lembNetaN}, $\nu_N(A) = \nu_{\eta,N}(F_k A)$ for any
Borel set $A \subseteq L_k$, and if $\phi$ is a continuous and bounded
function, then
\begin{eqnarray*}
\int_\OO \phi(p) \,d\nu_N(p) & = & \int
_{L^+_k}\phi(p) \,d\nu_N(p) + \int
_{\OO\minus L_k^+}\phi(p) \,d\nu_N(p)
\\
& = & \int_{H_+} \phi \bigl( \bigl(F_k^{-1}|H_+
\bigr)^{-1}(y) \bigr) \,d \nu_{\eta,N}(y) + \int
_{\OO \minus L_k^+} \phi(p) \,d \nu_N(p).
\end{eqnarray*}
The standard CLT in $\RR^{d+1}$ (e.g., \cite{Brei}, Theorem 11.10) implies
that the random variable $\sqrt{N} \eta_N$ converges in distribution
to a centered Gaussian with covariance~$C_k$.  Therefore,
\[
\lim_{N \to \infty} \int_{H_+} \phi \bigl(
\bigl(F_k^{-1}|H_+\bigr)^{-1}(y) \bigr) \,d
\nu_{\eta,N}(y) = \int_{H_+} \phi \bigl(
\bigl(F_k^{-1}|H_+\bigr)^{-1}(y) \bigr)
\,dg_k(y).
\]

\begin{lem}\label{lLminus}
If the $j$th first moment satisfies $\s_j < 0$, then $\nu_N(L_j^+)
\to 0$ and
\[
\lim_{N \to \infty} \int_{L_j^+} \phi(p) \,d
\nu_N(p) = 0.
\]
\end{lem}

\begin{pf}
Theorem \ref{tstickyLLN}(1).
\end{pf}

Resuming the proof of the theorem, consider the term
\[
\int_{\OO\minus L_k^+} \phi(p) \,d \nu_N(p) = \int
_{L_0} \phi(p) \,d \nu_N(p) + \int
_{L_k^-} \phi(p) \,d \nu_N(p),
\]
where $L_k^- = \OO \minus L_k = \bigcup_{j \neq k} L_j^+$, which
excludes the spine $L_0$.  With the projection $P_0 \dvtx \OO \to L_0,
(x^{(0)},x,j) \mapsto (0,x,j)$ the function $p \mapsto \phi(P_0 p)$ is
again continuous and bounded, Lemma \ref{lLminus} implies that
\begin{equation}
\label{eLminus} \lim_{N \to \infty} \int_{L_k^-}
\phi(P_0 p) \,d \nu_N(p) = 0.
\end{equation}
Therefore,\vspace*{-1pt}
\begin{eqnarray*}
\lim_{N \to \infty} \int_{L_0} \phi(p) \,d
\nu_N(p) & = & \lim_{N \to  \infty} \int_{L_0}
\phi(P_0 p) \,d \nu_N(p)
\\[-1pt]
& = & \lim_{N \to \infty} \biggl( \int_{L_k^-}
\phi(P_0 p) \,d \nu_N(p) + \int_{L_0}
\phi(P_0 p) \,d \nu_N(p) \biggr)
\\[-1pt]
& = & \lim_{N \to \infty} \biggl( \int_\OO
\phi(P_0 p) \,d \nu_N(p) - \int_{L^+_k}
\phi(P_0 p) \,d \nu_N(p) \biggr).\vspace*{-1pt}
\end{eqnarray*}
Observe that\vspace*{-1pt}
\[
\int_\OO \phi(P_0 p) \,d \nu_N(p) =
\int_S \phi \circ (P_S|_{L_0})^{-1}(y)
\,d \gamma_N(y),\vspace*{-1pt}
\]
where $\gamma_N = \nu_N\circ P_S^{-1}$ which is the law of $\sqrt{N}
\bar y_N$ on $S$, where $\bar y_N$ is the projected barycenter from
Lemma \ref{ly}.  Therefore, setting $\hat \phi = \phi \circ
(P_S|_{L_0})^{-1}$ and applying the usual CLT to $\sqrt{N} \bar y_N
\in \RR^d$,\vspace*{-1pt}
\[
\lim_{N \to \infty} \int_\OO \phi(P_0 p) \,d
\nu_N(p) = \lim_{N \to \infty} \int_S \hat\phi(
y) \,d \gamma_{N}(y) = \int_S \hat\phi(y) \,d
g_S(y).
\]
We cannot apply the same argument to\vspace*{-1pt}
\[
\lim_{N \to \infty} \int_{L^+_k} \phi(P_0 p) \,d
\nu_N(p) = \lim_{N \to \infty} \int_{L^+_k} \hat
\phi(y) \,d \tau_N(y)\vspace*{-1pt}
\]
with $\tau_N = \nu \circ (P_S|_{L_k^+})^{-1}$ because there is no CLT
for $\tau_N$.  We have, however, above derived a CLT for $\nu_N \circ
F_k^{-1} = \nu_{\eta,N}$ on $H_+=F_k(L_k^+)$:\vspace*{-1pt}
\[
\lim_{N \to \infty} \int_{L^+_k} \phi(P_0 p) \,d
\nu_N(p) = \lim_{N \to \infty} \int_{H_+}
\widetilde \phi (q) \,d \nu_{\eta,N}(q) = \lim_{N \to \infty} \int
_{H_+} \widetilde\phi(q) \,d g_k(q),\vspace*{-1pt}
\]
where $\widetilde \phi = \phi\circ P_0 \circ F_k\circ\hat P^{-1}$.  In
summary, we have shown that\vspace*{-1pt}
\begin{eqnarray*}
&&\lim_{N \to \infty} \int_\OO \phi(p) \,d
\nu_N(p) \\[-1pt]
&&\qquad =  \int_{H_+} \phi\circ
F_k^{-1}(q) \,dg_k(q) + \int
_S \hat \phi(y) \,d g_S(y) - \int
_{H_+} \widetilde \phi(q) \,d g_k(q)
\\[-1pt]
&&\qquad =  \int_{H_+} \phi\circ F_k^{-1}(q)
\,dg_k(q) + \int_H \phi\circ
F_k^{-1}(q) \,dh_k^0(q)
\\[-1pt]
&&\qquad =  \int_{\oH_+} \phi\circ F^{-1}(q)
\,dh_k(q),
\end{eqnarray*}
where the second equality uses the fact that $\widetilde \phi =
\phi\circ F_k^{-1}$ on $H$ and the final equality the fact that $g_k$
has no mass supported on the spine $H$, so the integral of $\phi\circ
F^{-1} \,dg_k$ over $H_+$ can just as well be taken over $\oH_+$.

The sticky case proceeds in much the same way as the partly sticky
case does, except that instead of equation \eqref{eLminus}, the simpler
statement
\[
\lim_{N \to \infty} \int_{\OO \minus S} \phi(P_0 p) \,d
\nu_N(p) = 0
\]
holds. From that, the next step results in
\[
\lim_{N \to \infty} \int_{L_0} \phi(p) \,d
\nu_N(p) = \lim_{N \to \infty} \int_\OO
\phi(P_0 p) \,d \nu_N(p),
\]
and then the usual CLT applied to $\sqrt{N} \bar y_N \in \RR^d$ proves
the desired result.
\end{pf}

\subsection{The nonsticky central limit theorem}

If the mean is nonsticky with first moment $\s_k > 0$, then the limit
$\bar b$ is in the interior of $L_k^+$.  In this case, the tangent
cone at $\bar b$ is the vector space $\RR^{d+1}$, and the fluctuations
of $b_N$ about the limit $\bar b$ are qualitatively similar to what is
described in the classical central limit theorem.

\begin{defin}
In this section we let $\widetilde \nu_N$ be the law on $\RR^{d+1}$ of
the random variable $\sqrt{N} (F_{k} b_N - F_{k} \bar b)$,
\[
\Pm \bigl( \bigl\{\omega | \sqrt{N}(F_{k} b_N -
F_{k} \bar b) \in A \bigr\} \bigr) = \widetilde \nu_N(A)
\]
for all Borel sets $A \subseteq \RR^{d+1}$.
\end{defin}

\begin{defin}
Assume $\s_k > 0$.  Let $\tilde g_k$ be the law of a multivariate
normal random variable on $\RR^{d+1}$ with mean zero and covariance
matrix
\[
\tilde C_k = \int_{\RR^{d+1}} (x - F_k
\bar b) (x - F_k \bar b)^T \,d \tilde
\mu_k(x).
\]
\end{defin}

In contrast to the case of a sticky or partly sticky mean, the weak limit
of $\nu_N$ is that of a nondegenerate Gaussian on $\RR^{d+1}$:

\begin{theo}[(Nonsticky CLT)]\label{tnonsticky}
Assume $\s_k > 0$.  Then for any continuous bounded function
$\phi\dvtx \mathbb{R}^{d+1} \to \RR$,
\[
\lim_{N\to\infty} \int_{\mathbb{R}^{d+1}}\phi(x) \,d \widetilde
\nu_N(x) = \int_{\RR^{d+1}} \phi(x) \,d\tilde
g_k(x).
\]
\end{theo}

\begin{pf}
Since $\s_k > 0$, $\bar b \in L_k^+$ and Lemma \ref{lembNetaN}
implies $F_k \bar b = \bar \eta =\break  \int_{\RR^{d+1}} x  \,d\tilde
\mu_k(x)$.  Also,
\[
\sqrt{N}\bigl(F_{k} b_N(\omega) - F_{k} \bar b
\bigr) = \sqrt{N}\bigl(\eta_{k,N}(\omega) - \bar \eta\bigr) \qquad\forall N
\geq N^*(\omega)
\]
holds with probability one.  Therefore, for any Borel set
\[
\bigl\llvert \widetilde \nu_N(A) - \Pm \bigl( \bigl\{\omega |
\sqrt{N}\bigl(\eta_{k,N}(\omega) - \bar \eta\bigr) \in A \bigr\} \bigr)
\bigr\rrvert \leq R_N,
\]
where $R_N = \Pm ( \{\omega  |  N < N^*(\omega) \}  )$.  By
the classical central limit theorem, the random variable
$\sqrt{N}(\eta_{k,N}(\omega) - \bar \eta)$ converges in law to a
centered, multivariate Gaussian on $\RR^{d+1}$ with covariance $C_k$
as $N \to \infty$.  Consequently,
\[
\limsup_{N \to \infty} \biggl\llvert \int_{\mathbb{R}^{d+1}} \phi(x) \,d
\widetilde \nu_N(x) - \int_{\RR^{d+1}} \phi(x) \,d\tilde
g_k(x) \biggr\rrvert \leq 2\limsup_{N \to \infty} R_N
\Vert\phi\Vert_\infty = 0.
\]
\upqed\end{pf}

\section*{Acknowledgments}
Our thanks go to Seth Sullivant, for bringing our attention to
stickiness for means of phylogenetic trees.  This paper is output from
a Working Group that ran under the Statistical and Applied
Mathematical Sciences Institute (\textsc{SAMSI})
2010--2011 program on Analysis of Object Data. We are grateful to the
other members of the \textsc{SAMSI} Working Group on sampling from
stratified spaces, not listed among the authors, who facilitated the
development of this research program.  We are indebted to
\textsc{SAMSI} itself, for sponsoring many of the authors' visits to
the Research Triangle, for hosting the Working Group meetings, and for
stimulating this cross-disciplinary research in its unique way.


\printaddresses


\begin{thebibliography}{24}

\bibitem{ArFiPeAy06}
\begin{barticle}[mr]
\bauthor{\bsnm{Arsigny},~\bfnm{Vincent}\binits{V.}},
\bauthor{\bsnm{Fillard},~\bfnm{Pierre}\binits{P.}},
\bauthor{\bsnm{Pennec},~\bfnm{Xavier}\binits{X.}} \AND
\bauthor{\bsnm{Ayache},~\bfnm{Nicholas}\binits{N.}}
(\byear{2007}).
\btitle{Geometric means in a novel vector space structure on symmetric
positive-definite matrices}.
\bjournal{SIAM J. Matrix Anal. Appl.}
\bvolume{29}
\bpages{328--347 (electronic)}.
\bid{doi={10.1137/050637996}, issn={0895-4798}, mr={2288028}}
\bptnote{check year}%
\bptok{imsref}%
\end{barticle}
\endbibitem

\bibitem{basrak}
\begin{barticle}[mr]
\bauthor{\bsnm{Basrak},~\bfnm{Bojan}\binits{B.}}
(\byear{2010}).
\btitle{Limit theorems for the inductive mean on metric trees}.
\bjournal{J. Appl. Probab.}
\bvolume{47}
\bpages{1136--1149}.
\bid{issn={0021-9002}, mr={2752884}}
\bptok{imsref}%
\end{barticle}
\endbibitem

\bibitem{BassPier96}
\begin{barticle}[auto:STB|2013/01/23|16:20:06]
\bauthor{\bsnm{Basser},~\bfnm{P.~J.}\binits{P.~J.}} \AND
\bauthor{\bsnm{Pierpaoli},~\bfnm{C.}\binits{C.}}
(\byear{1996}).
\btitle{Microstructural and physiological features of tissues elucidated by
quantitative-diffusion-tensor MRI}.
\bjournal{J. Magn. Reson. {B}}
\bvolume{111}
\bpages{209--219}.
\bptok{imsref}%
\end{barticle}
\endbibitem

\bibitem{BP03}
\begin{barticle}[mr]
\bauthor{\bsnm{Bhattacharya},~\bfnm{Rabi}\binits{R.}} \AND
\bauthor{\bsnm{Patrangenaru},~\bfnm{Vic}\binits{V.}}
(\byear{2003}).
\btitle{Large sample theory of intrinsic and extrinsic sample means on
manifolds. {I}}.
\bjournal{Ann. Statist.}
\bvolume{31}
\bpages{1--29}.
\bid{doi={10.1214/aos/1046294456}, issn={0090-5364}, mr={1962498}}
\bptok{imsref}%
\end{barticle}
\endbibitem

\bibitem{BP05}
\begin{barticle}[mr]
\bauthor{\bsnm{Bhattacharya},~\bfnm{Rabi}\binits{R.}} \AND
\bauthor{\bsnm{Patrangenaru},~\bfnm{Vic}\binits{V.}}
(\byear{2005}).
\btitle{Large sample theory of intrinsic and extrinsic sample means on
manifolds. {II}}.
\bjournal{Ann. Statist.}
\bvolume{33}
\bpages{1225--1259}.
\bid{doi={10.1214/009053605000000093}, issn={0090-5364}, mr={2195634}}
\bptok{imsref}%
\end{barticle}
\endbibitem

\bibitem{BB11}
\begin{bmisc}[auto:STB|2013/01/23|16:20:06]
\bauthor{\bsnm{Bhattacharya},~\bfnm{R.~N.}\binits{R.~N.}},
\bauthor{\bsnm{Buibas},~\bfnm{M.}\binits{M.}},
\bauthor{\bsnm{Dryden},~\bfnm{I.~L.}\binits{I.~L.}},
\bauthor{\bsnm{Ellingson},~\bfnm{L.~A.}\binits{L.~A.}},
\bauthor{\bsnm{Groisser},~\bfnm{D.}\binits{D.}},
\bauthor{\bsnm{Hendriks},~\bfnm{H.}\binits{H.}},
\bauthor{\bsnm{Huckemann},~\bfnm{S.}\binits{S.}},
\bauthor{\bsnm{Le},~\bfnm{H.}\binits{H.}},
\bauthor{\bsnm{Liu},~\bfnm{X.}\binits{X.}},
\bauthor{\bsnm{Marron},~\bfnm{J.~S.}\binits{J.~S.}},
\bauthor{\bsnm{Osborne},~\bfnm{D.~E.}\binits{D.~E.}},
\bauthor{\bsnm{Patr\^{a}ngenaru},~\bfnm{V.}\binits{V.}},
\bauthor{\bsnm{Schwartzman},~\bfnm{A.}\binits{A.}},
\bauthor{\bsnm{Thompson},~\bfnm{H.~W.}\binits{H.~W.}}
\AND
\bauthor{\bsnm{Wood},~\bfnm{A.~T.~A.}\binits{A.~T.~A.}}
(\byear{2011}).
\bhowpublished{Extrinsic data analysis on sample spaces with
a manifold stratification. In \textit{Advances in Mathematics, Invited Contributions at the Seventh Congress
of Romanian Mathematicians, Brasov, 2011} (L. Beznea, V. Br\^{\i}zanescu, M.
Iosifescu, G. Marinoschi, R.~Purice and D. Timotin. eds.) 148--156. Publishing House of the
Romanian Academy}.
\bptok{imsref}%
\end{bmisc}
\endbibitem

\bibitem{BHV01}
\begin{barticle}[mr]
\bauthor{\bsnm{Billera},~\bfnm{Louis~J.}\binits{L.~J.}},
\bauthor{\bsnm{Holmes},~\bfnm{Susan~P.}\binits{S.~P.}} \AND
\bauthor{\bsnm{Vogtmann},~\bfnm{Karen}\binits{K.}}
(\byear{2001}).
\btitle{Geometry of the space of phylogenetic trees}.
\bjournal{Adv. in Appl. Math.}
\bvolume{27}
\bpages{733--767}.
\bid{doi={10.1006/aama.2001.0759}, issn={0196-8858}, mr={1867931}}
\bptok{imsref}%
\end{barticle}
\endbibitem

\bibitem{Brei}
\begin{bbook}[mr]
\bauthor{\bsnm{Breiman},~\bfnm{Leo}\binits{L.}}
(\byear{1992}).
\btitle{Probability}.
\bseries{Classics in Applied Mathematics}
\bvolume{7}.
\bpublisher{SIAM},
\blocation{Philadelphia, PA}.
\bid{doi={10.1137/1.9781611971286}, mr={1163370}}
\bptok{imsref}%
\end{bbook}
\endbibitem

\bibitem{DM98}
\begin{bbook}[mr]
\bauthor{\bsnm{Dryden},~\bfnm{I.~L.}\binits{I.~L.}} \AND
\bauthor{\bsnm{Mardia},~\bfnm{K.~V.}\binits{K.~V.}}
(\byear{1998}).
\btitle{Statistical Shape Analysis}.
\bpublisher{Wiley}, \blocation{Chichester}.
\bid{mr={1646114}}
\bptok{imsref}%
\end{bbook}
\endbibitem

\bibitem{Fr48}
\begin{barticle}[mr]
\bauthor{\bsnm{Fr{\'e}chet},~\bfnm{Maurice}\binits{M.}}
(\byear{1948}).
\btitle{Les \'el\'ements al\'eatoires de nature quelconque dans un espace
distanci\'e}.
\bjournal{Ann. Inst. H. Poincar\'e}
\bvolume{10}
\bpages{215--310}.
\bid{mr={0027464}}
\bptok{imsref}%
\end{barticle}
\endbibitem

\bibitem{SMT}
\begin{bbook}[mr]
\bauthor{\bsnm{Goresky},~\bfnm{Mark}\binits{M.}} \AND
\bauthor{\bsnm{MacPherson},~\bfnm{Robert}\binits{R.}}
(\byear{1988}).
\btitle{Stratified {M}orse Theory}.
\bseries{Ergebnisse der Mathematik und Ihrer Grenzgebiete (3) [Results in
Mathematics and Related Areas (3)]}
\bvolume{14}.
\bpublisher{Springer}, \blocation{Berlin}.
\bid{mr={0932724}}
\bptok{imsref}%
\end{bbook}
\endbibitem

\bibitem{HL96}
\begin{barticle}[mr]
\bauthor{\bsnm{Hendriks},~\bfnm{Harrie}\binits{H.}} \AND
\bauthor{\bsnm{Landsman},~\bfnm{Zinoviy}\binits{Z.}}
(\byear{1996}).
\btitle{Asymptotic behavior of sample mean location for manifolds}.
\bjournal{Statist. Probab. Lett.}
\bvolume{26}
\bpages{169--178}.
\bid{doi={10.1016/0167-7152(95)00007-0}, issn={0167-7152}, mr={1381468}}
\bptok{imsref}%
\end{barticle}
\endbibitem

\bibitem{Huc12}
\begin{bmisc}[auto:STB|2013/01/23|16:20:06]
\bauthor{\bsnm{Huckemann},~\bfnm{S.}\binits{S.}}
(\byear{2012}).
\bhowpublished{On the meaning of mean shape: Manifold stability, locus
and the two sample test. \textit{Ann. Inst. Statist. Math.} \textbf{64} 1227--1259}.
\bptok{imsref}%
\end{bmisc}
\endbibitem

\bibitem{Huc11}
\begin{barticle}[mr]
\bauthor{\bsnm{Huckemann},~\bfnm{Stephan}\binits{S.}}
(\byear{2011}).
\btitle{Inference on 3{D} {P}rocrustes means: Tree bole growth, rank deficient
diffusion tensors and perturbation models}.
\bjournal{Scand. J. Stat.}
\bvolume{38}
\bpages{424--446}.
\bid{doi={10.1111/j.1467-9469.2010.00724.x}, issn={0303-6898}, mr={2833839}}
\bptok{imsref}%
\end{barticle}
\endbibitem

\bibitem{Jup88}
\begin{barticle}[auto:STB|2013/01/23|16:20:06]
\bauthor{\bsnm{Jupp},~\bfnm{P.}\binits{P.}}
(\byear{1988}).
\btitle{Residuals for directional data}.
\bjournal{J. Appl. Stat.}
\bvolume{15}
\bpages{137--147}.
\bptok{imsref}%
\end{barticle}
\endbibitem

\bibitem{avgTree}
\begin{bmisc}[auto:STB|2013/01/23|16:20:06]
\bauthor{\bsnm{Miller},~\bfnm{E.}\binits{E.}},
\bauthor{\bsnm{Owen},~\bfnm{M.}\binits{M.}} \AND
\bauthor{\bsnm{Provan},~\bfnm{J.~S.}\binits{J.~S.}}
(\byear{2012}).
\bhowpublished{Averaging metric phylogenetic trees. Unpublished manuscript, available at \url{http://arxiv.org/abs/1211.7046}.}
\bptok{imsref}%
\end{bmisc}
\endbibitem

\bibitem{OwenProvan11}
\begin{barticle}[auto:STB|2013/01/23|16:20:06]
\bauthor{\bsnm{Owen},~\bfnm{M.}\binits{M.}} \AND
\bauthor{\bsnm{Provan},~\bfnm{J.~S.}\binits{J.~S.}}
(\byear{2011}).
\btitle{A fast algorithm for computing geodesic distance in tree space}.
\bjournal{ACM/IEEE Transactions on Computational Biology and Bioinformatics}
\bvolume{8}
\bpages{2--13}.
\bptok{imsref}%
\end{barticle}
\endbibitem

\bibitem{PLS10}
\begin{barticle}[mr]
\bauthor{\bsnm{Patrangenaru},~\bfnm{V.}\binits{V.}},
\bauthor{\bsnm{Liu},~\bfnm{X.}\binits{X.}} \AND
\bauthor{\bsnm{Sugathadasa},~\bfnm{S.}\binits{S.}}
(\byear{2010}).
\btitle{A nonparametric approach to 3{D} shape analysis from digital camera
images. {I}}.
\bjournal{J. Multivariate Anal.}
\bvolume{101}
\bpages{11--31}.
\bid{doi={10.1016/j.jmva.2009.02.010}, issn={0047-259X}, mr={2557615}}
\bptok{imsref}%
\end{barticle}
\endbibitem

\bibitem{PM03}
\begin{bmisc}[auto:STB|2013/01/23|16:20:06]
\bauthor{\bsnm{Patrangenaru},~\bfnm{V.}\binits{V.}} \AND
\bauthor{\bsnm{Mardia},~\bfnm{K.~V.}\binits{K.~V.}}
(\byear{2003}).
\bhowpublished{Affine shape analysis and image analysis.~In \textit{Proc. of
the 2003 Leeds Annual Statistics Research Workshop} 57--62, available at
\href{http://www1.maths.leeds.ac.uk/Statistics/workshop/lasr2003/proceedings/patrangenaru.pdf}{http://www1.maths.leeds.ac.uk/Statistics/workshop/lasr2003/}
\href{http://www1.maths.leeds.ac.uk/Statistics/workshop/lasr2003/proceedings/patrangenaru.pdf}{proceedings/patrangenaru.pdf}.}
\bptok{imsref}%
\end{bmisc}
\endbibitem

\bibitem{Schwartzman08}
\begin{barticle}[mr]
\bauthor{\bsnm{Schwartzman},~\bfnm{Armin}\binits{A.}},
\bauthor{\bsnm{Dougherty},~\bfnm{Robert~F.}\binits{R.~F.}} \AND
\bauthor{\bsnm{Taylor},~\bfnm{Jonathan~E.}\binits{J.~E.}}
(\byear{2008}).
\btitle{False discovery rate analysis of brain diffusion direction maps}.
\bjournal{Ann. Appl. Stat.}
\bvolume{2}
\bpages{153--175}.
\bid{doi={10.1214/07-AOAS133}, issn={1932-6157}, mr={2415598}}
\bptok{imsref}%
\end{barticle}
\endbibitem

\bibitem{ScwMascTay08}
\begin{barticle}[mr]
\bauthor{\bsnm{Schwartzman},~\bfnm{Armin}\binits{A.}},
\bauthor{\bsnm{Mascarenhas},~\bfnm{Walter~F.}\binits{W.~F.}} \AND
\bauthor{\bsnm{Taylor},~\bfnm{Jonathan~E.}\binits{J.~E.}}
(\byear{2008}).
\btitle{Inference for eigenvalues and eigenvectors of {G}aussian symmetric
matrices}.
\bjournal{Ann. Statist.}
\bvolume{36}
\bpages{2886--2919}.
\bid{doi={10.1214/08-AOS628}, issn={0090-5364}, mr={2485016}}
\bptok{imsref}%
\end{barticle}
\endbibitem

\bibitem{sturm}
\begin{bincollection}[mr]
\bauthor{\bsnm{Sturm},~\bfnm{Karl-Theodor}\binits{K.-T.}}
(\byear{2003}).
\btitle{Probability measures on metric spaces of nonpositive curvature}.
In \bbooktitle{Heat Kernels and Analysis on Manifolds, Graphs, and Metric
Spaces ({P}aris, 2002)}.
\bseries{Contemp. Math.}
\bvolume{338}
\bpages{357--390}.
\bpublisher{Amer. Math. Soc.}, \blocation{Providence, RI}.
\bid{doi={10.1090/conm/338/06080}, mr={2039961}}
\bptok{imsref}%
\end{bincollection}
\endbibitem

\bibitem{Zie77}
\begin{bincollection}[mr]
\bauthor{\bsnm{Ziezold},~\bfnm{Herbert}\binits{H.}}
(\byear{1977}).
\btitle{On expected figures and a strong law of large numbers for random
elements in quasi-metric spaces}.
In \bbooktitle{Transactions of the {S}eventh {P}rague {C}onference on
{I}nformation {T}heory, {S}tatistical {D}ecision {F}unctions, {R}andom
{P}rocesses and of the {E}ighth {E}uropean {M}eeting of {S}tatisticians
({T}ech. {U}niv. {P}rague, {P}rague, 1974), {V}ol. {A}}
\bpages{591--602}.
\bpublisher{Reidel}, \blocation{Dordrecht}.
\bid{mr={0501230}}
\bptok{imsref}%
\end{bincollection}\vspace*{-1pt}
\endbibitem

\end{thebibliography}
\end{document}